\DeclareMathOperator*{\argmin}{arg\,min}
\newcommand{\rn}{\mathbb{R}^{n}}
\newcommand{\R}{\mathbb{R}}
\newcommand{\ut}{u_{\theta}}
\begin{document}

\title{On Reduction of Exhausters via a Support Function Representation
}


\author{Didem Tozkan}


\institute{D. Tozkan \at
              Eski\c{s}ehir Technical University,  Department of Mathematics, 26470, Eski\c{s}ehir, Turkey \\
              Tel.:+90-222-3213550
              \email{dtokaslan@eskisehir.edu.tr\\
               ORCID ID: 0000-0001-5244-5544}
}

\date{Received: date / Accepted: date}

\maketitle

\begin{abstract}
Exhausters are families of compact, convex sets which provide minmax or maxmin representations of positively homogeneous functions and they are efficient tools for the study of nonsmooth functions \cite{d2}.  Upper and lower exhausters of positively homogeneous functions are employed to describe optimality conditions in geometric terms and also to find directions of steepest descent or ascent. Since an upper/lower exhauster may contain finitely or infinitely many compact convex sets, the problem of minimality and reduction of exhausters naturally arise. There are several approaches to reduce exhausters \cite{ab1,gfinite,ros2,ros3,reduction}. In this study, in the sense of inclusion-minimality, some reduction techniques for upper exhausters of positively homogeneous functions defined from  $\R^2$  to $\R$  is proposed by means of a representation of support functions. These techniques have concrete geometric meanings and they form a basis for a necessary and sufficient condition for inclusion-minimality of exhausters. Some examples are presented to illustrate each reduction technique.
\keywords{Exhausters \and Reduction of exhausters \and Support function representation\and Minimality of exhausters by inclusion.}
\subclass{49J52 \and 90K47 \and 90C56}
\end{abstract}

\section{Introduction}
\label{intro}
The concept of exhausters which gives a newpoint of view to optimality is defined by
Demyanov \cite{d2}. The exhaustive families of upper convex and lower concave approximations,
defined by Pschenichnyi \cite{psch}, were used to define exhausters.
Exhausters are families of convex bodies providing minmax or maxmin representations of positively homogeneous functions and they are effective tools for the study of nonsmooth functions \cite{ab2,d1,dr3,dr4}. These notions are employed to describe optimality conditions in geometric terms relying on the fact that the directional derivative and the generalized ones (such as Dini, Hadamard, Clarke and Michel-Penot derivatives) are positively homogeneous functions of directions \cite{d1} and they also can be used to find directions of steepest descent or ascent.
Demyanov and Roshchina gave some optimality conditions in terms of exhausters and
they obtained some relationships between exhausters and some generalized subdifferentials
\cite{dr3,dr4}. Furthermore, some optimality conditions in terms of lower and upper exhausters were introduced in \cite{ab2,dr1,dr2,weakexh}.

Since an upper/lower exhauster of a positively homogeneous function may contain
infinitely many compact convex sets, reduction of exhausters is an important part
of this theory. Reduction techniques mean to obtain minimal exhausters or smaller exhausters by inclusion or by form. Roshchina defined minimality of exhausters both by inclusion and by form and gave some reduction techniques to reduce exhausters \cite{ros2,ros3}. Grzybowski et al. \cite{gfinite} gave a criterion, in terms of shadowing sets, for an arbitrary upper exhauster to be an exhauster of sublinear function and a criterion for the minimality of finite upper exhausters. Also, they showed that minimal exhausters do not have to be unique.
On the other hand, K\" u\c c\" uk et al. defined weak exhausters consisting convex sets corresponding to the weak subgradients of a directional derivative of a function, and they gave some conditions to reduce weak exhausters \cite{weakexh,reduction}. Recently, Abbasov \cite{ab1} propose new conditions for the verification of minimality of exhausters and present some techniques for their reduction.

In classical mathematics the most widely used representation scheme for convex bodies is the support function representation \cite{bf,schn}. It was introduced by Minkowski in 1903 \cite{mink}, and has been extensively studied by mathematicians thereafter. Ghosh and Kumar \cite{gosh} showed that the support function representation of convex bodies can be very effectively used in computing variety of geometric operations within a single framework. The idea of a single framework is to establish that such geometric operations are nothing but simple algebraic transformations of the support functions of the operand objects. They presented that the support function can be viewed not as a single representation, but one of a class of representation schemes. Since the support function is a real-valued function, arithmetic operations such as addition, subtraction, reciprocal and max-min of support functions give rise to geometric operations such as Minkowski addition (dilation), Minkowski decomposition (erosion), polar duality, and union-intersection of corresponding convex bodies, respectively.

In this work, we propose a different approach to reduce exhausters using support function representations of convex bodies. The framework of this study is limited to the functions defined on $\R^2$. First we show that an upper exhauster $E^*$ of $h$ can be reduced to a smaller by inclusion one by examining the support function representations $\rho_C$ of sets $C\in E^*$. Since the support function representation of a convex body occurs as a piecewise function of sinusoidal curves, we examine the contribution of each $\rho_C$ to the pointwise minimum value of support functions, namely $\rho:=\underset{C\in E^*}{\min}{\rho_C}$. It is proved that a convex body can be discarded from $E^*$ if it does not contribute to the value of $\rho$. Furthermore we present some reduction techniques which have apparent geometric meanings to obtain an exhauster smaller by inclusion. Also, we obtain a necessary and sufficient condition for a convex body not to be removed from an upper exhauster. Furthermore we characterize inclusion minimality of an upper exhauster by using support function representations of convex bodies. The method presented in this work is based on the support function representations of the convex bodies and when these representations are displayed together for an exhauster, it can be clearly revealed which of the sets  are necessary or unnecessary. Therefore among the other methods given so far for the reduction of exhausters, our new approach is much more convenient to make interpretations geometrically and decide to discard a set from an exhauster .

The paper is organized as follows. In Sect. 2 we give some basic definitions and summarize the concept of support function representation of convex bodies. In Sect. 3 we present the main results on reducing exhausters and we introduce a necessary and sufficient condition for the inclusion minimality of an upper exhauster. We illustrate each of the results and demonstrate the usage of these reduction techniques. In Sect. 4 we give a discussion of presented results.

\section{Preliminaries}

Let us recall some basic notions, properties and state the notations that we used throughout this study.
\begin{definition}\cite{rock}
A function $h:\rn\to\R$ is called a \textit{positively homogeneous function (p.h.)} of degree one if
\[h(\lambda g)=\lambda h(g),\ \ \ \forall\lambda\geq0,\ \forall g\in\rn.\]
\end{definition}

\begin{definition}\cite{rock}
Let $A\subseteq\rn$ be a nonempty set and $x\in\rn$. The function defined as
\begin{equation}\label{Eq1}
H(A,x)=\underset{a\in A}{\sup}{\langle a,x\rangle}
\end{equation}
is called the \textit{support function} of the set $A$.
\end{definition}

As mentioned in \cite{gosh}, since the support function of a set $A$ is a p.h. function, it is more convenient to use the function $H(A,u)$ where $u\in S^{n-1}$ denotes a unit vector of $\rn$. $H(A,u)$ is a complete representation of the set $A$ because the values of $H(A,u)$ for all $u\in S^{n-1}$ completely determine $A$ as
\begin{equation}
A=\{x\in\rn\ |\ \langle x,u\rangle\leq H(A,u),\ \forall u\in S^{n-1}\}.
\end{equation}
That means $A$ is the intersection of all the half-spaces $\langle x,u\rangle\leq H(A,u)$. Moreover if $A$ is a convex body (i.e., nonempty, compact, convex set), $H(A,u)$ can be obtained by (\ref{Eq1}) considering only the boundary points of $A$ instead of using every point $a\in A$, that is
\begin{equation}\label{Eq2}
H(A,u)=\underset{a\in bdA}{\max}{\langle a,u\rangle}, \ \ \forall u\in S^{n-1}
\end{equation}
where $bdA$ denotes the boundary of the set $A$.
\begin{definition}\cite{d2}
Let $h:\rn\to\R$ be a \textup{p.h.} function.
\begin{enumerate}
\item[\textup{(a)}] A family $E^*$ of nonempty, compact and convex sets in $\rn$ is called an \textit{upper
exhauster} of $h$ if
\begin{equation*}
h(g)=\inf_{C\in E^*}\max_{v\in C}\langle v,g\rangle \text{ for all }g\in \R^n.
\end{equation*}
\item[\textup{(b)}] A family $E_*$ of nonempty, compact and convex sets in $\rn$ is called a \textit{lower
exhauster} of $h$ if
\begin{equation*}
h(g)=\sup_{C\in E_*}\min_{w\in C}\langle w,g\rangle \text{ for all }g\in \R^n.
\end{equation*}
\end{enumerate}
\end{definition}

The minimality of exhausters both by inclusion and by form was defined in \cite{ros2,ros3}. In order to check optimality conditions in terms of exhausters or find directions of steepest descent or ascent, one prefer to deal with rather "smaller exhausters" that has two meanings: Smaller  by means of quantity of sets or smaller by means of the size (breadth, width) of sets. Hence, the aim of reduction techniques is to obtain smaller or (if possible) minimal exhausters by inclusion or by form.
  \begin{definition}\cite{ab1}
    Let $h$ be a p.h. function, $E_1$ and $E_2$ be upper (lower) exhausters of $h$. If $E_1\subset E_2$, then $E_1$ is called smaller by inclusion than $E_2$.
    \end{definition}
    \begin{definition}\cite{ros3}
    An upper (lower) exhauster $E$ of the p.h. function $h$ is called minimal by inclusion, if there is no other upper (lower) exhauster $\widetilde{E}$ which is smaller by inclusion than $E$.
    \end{definition}

 \begin{definition}\cite{ab1}
  An upper (lower) exhauster $E_1$ of the p.h. function $h$ is said to be smaller by form than other upper (lower) exhauster $E_2$ of $h$, if for all $\widetilde{C}\in E_1$ there exists a set $C\in E_2$ such that $\widetilde{C}\subset C$.
    \end{definition}
  \begin{definition}\cite{ros3}
    An upper (lower) exhauster $E$ of the p.h. function $h$ is called minimal by form, if there is no other smaller by form upper (lower) exhauster $\widetilde{E}$ of $h$.
    \end{definition}
 Note that an exhauster minimal by form is also minimal by inclusion, but the converse is not true \cite{ros3}.

Here we summarize the concept of support function representation of convex bodies to be used in this work. The reader can find further properties in \cite{gosh}. Let $A\subseteq\R^2$ be a convex polygon. The set $A$ can be represented in terms of its support function as a sequence of sinusoidal curves in another $2$-dimensional space namely $\theta\rho$-space.
For each edge of $A$ there is only one supporting line, and if the outer normal direction of an edge $e_i$ is $u_i$, then the equation of the corresponding supporting line $L(A,u_i)$ consists of vectors $X=(x,y)$ such that $\langle X,u_i\rangle=H(A,u_i)$. We use the notation $H(A,u_i)=\rho_i$, shortly.

On the other hand, in 2-dimensional Euclidean space a unit vector $u\in S^1$ is uniquely determined by the angle $\theta$ (in radians) between $u$ and the positive $x$-axis, i.e., $u=(\cos\theta,\sin\theta)$. Therefore, the equation of supporting line $L(A,u_i)$ of $A$ at the direction $u_i=(\cos\theta_i,\sin\theta_i)$ can be expressed as
\[x\cos\theta_i+y\sin\theta_i=\rho_i.\]

If we consider a new coordinate system having $\theta$ and $\rho$ values as its axes, the line $L(A,u_i)$, and hence the edge $e_i$ can be presented as a point in $\theta\rho$-space having the coordinate $(\theta_i,\rho_i)$ (See e.g. Figure \ref{fig1}).

\begin{figure}[h]
\begin{center}
\includegraphics[scale=.5]{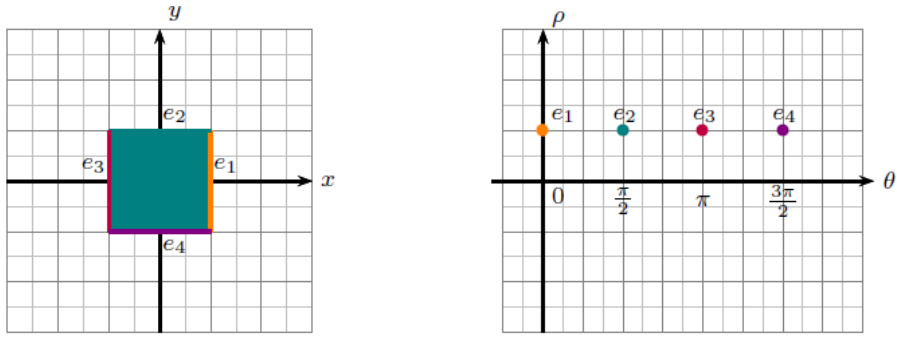}
\end{center}
\caption{The set $A=[-1,1]\times[-1,1]$ and representations of its edges $e_1,e_2,e_3,e_4$ as points in $\theta\rho$-space.}\label{fig1}
\end{figure}

Moreover, a point in $xy$-space can be transformed into a sinusoidal curve in $\theta\rho$-space.
Let $v_i=(x_i,y_i)$ is a vertex of polygon $A$.
The support function of the singleton $\{v_i\}$ is clearly $H({\{v_i\}},u)=\langle v_i,u\rangle$.
Using the notations $H({\{v_i\}},u)=\rho$ and $u=(\cos\theta,\sin\theta)$ we can write the $\theta\rho$-representation of vertex $v_i$
\begin{equation}\label{Eq3}
\rho=x_i\cos\theta+y_i\sin\theta=\Lambda\sin(\theta+\phi)
\end{equation}
where $\Lambda=\sqrt{x_i^2+y_i^2}$ and $\phi=\tan^{-1}(x_i/y_i)$. Equation (\ref{Eq3}) is the representation of the point $(x_i,y_i)$ in the $\theta\rho$-space and it is clearly a sinusoidal curve.

To sum up, a convex polygon $A$ in the $xy$-space can be transformed in the $\theta\rho$-space into a sequence of sinusoidal curves representing its vertices and the intersection point between two consecutive sine curves representing the respective edges of $A$ (e.g., Fig. 2).
In the same way, we may consider the representations of a circle, ellipse, etc. in the $\theta\rho$-spaces.

\begin{figure}[h]
\begin{center}
\includegraphics[scale=.5]{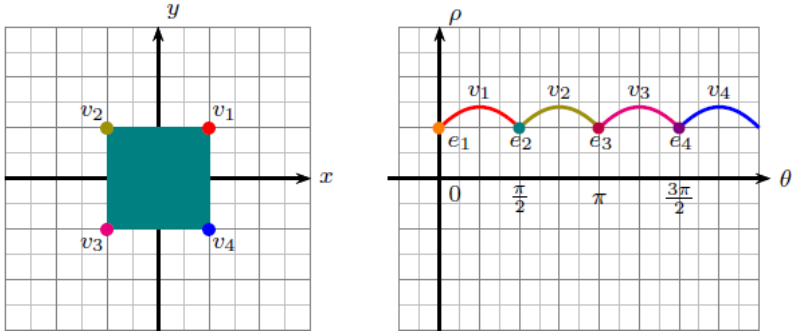}
\end{center}
\caption{The set $A=[-1,1]\times[-1,1]$ and representations of its vertices $v_1,v_2,v_3,v_4$ as sinusoidal curves in $\theta\rho$-space.}
\label{fig2}
\end{figure}

Geometric operations such that convex hull, intersection, Minkowski addition, Minkowski decomposition, polar duality etc. on convex bodies corresponds to algebraic operations such that Max-Min, addition, subtraction, reciprocal etc. on support functions of convex bodies, respectively. Examinations of these geometric operations by means of support functions were given in \cite{gosh}, in detail.

If $h$ is a p.h. Lipschitz function, then for all $u\in S^{n-1}$ it can be represented as
\[h(u)=\underset{C\in E^*}{\min}{H(C,u)}=\min_{C\in E^*}\max_{v\in C}\langle v,u\rangle\]
where $E^*$ is an upper exhauster of $h$ (see \cite{cas,d1}).
On the other hand, min operation of support functions corresponds to intersection operation of sets in $E^*$ (see \cite{gosh}, page 387). But the support function of the intersection of these convex bodies may not be the pointwise minimum of these support functions. Namely, even if the intersection $\widehat C:=\bigcap_{C\in E^*} C$ is nonempty, the value $H(\widehat C,u)$ may not be equal to the value of $h(u)=\underset{C\in E^*}{\min}{H(C,u)}$. For example if we consider the p.h. function $h(u_1,u_2)=\min\{\max\{u_1+2u_2,2u_1+4u_2,3u_1+2u_2\},\max\{u_1+u_2,u_1+3u_2,3u_1+3u_2,3u_1+u_2\}\}$ then it is clear that $E^*=\{C_1,C_2\}$ is an upper exhauster of $h$ where $C_1=conv\{(1,2),(2,4),(3,2)\}$ and $C_2=conv\{(1,1),(1,3),(3,1),(3,3)\}$. Taking the point $u=(1,1)$ we see that $h(1,1)=6$, but $H(C_1\cap C_2, (1,1))=\frac{11}{2}$. Thus there exists a $u\in\mathbb R^2$ such that $h(u)=\min\{H(C_1,u),H(C_2,u)\}> H(C_1\cap C_2,u)$. Hence we conclude that it is not suitable to intersect all of the sets in an exhauster to reduce it. Therefore, it is complicated to eliminate the sets of which support functions do not contribute to the value of $h(u)$. So we need an approach that takes into account the structure of an exhauster to reduce it adequately.

\section{Some Reduction Techniques for Upper Exhausters}

\indent \indent In this section, in the sense of inclusion-minimality, some reduction techniques for upper exhausters of p.h. functions defined on  $\R^2$ are proposed by means of support function representations.

\indent \indent These techniques lead us to decide whether a set can be discarded or not by observing the support function representations (they are generally sequences of sinusoidal curves in $\theta\rho$-space) of all sets belonging to an exhauster.
They also have very clear geometric meanings and interpretations allowing us to obtain (if possible) an exhauster that is smaller by inclusion. In addition, we present a necessary and sufficient condition for inclusion-minimality of exhausters.

Throughout this work $h:\R^2\to\R$ is a p.h. function and $E^*$ is an upper exhauster of $h$ that provides the representation
\[h(u)=\underset{C\in E^*}{\min}{H(C,u)}, \ \ \text{for all } u\in S^1.\]

Each of the convex body $C\in E^*$ can be represented by a curve uniquely determined as follows. Let $C\in E^*$ be an arbitrary convex body. It is clear from (\ref{Eq2}) that for every direction $u\in S^1$ there exists a boundary element of $v_u\in bd C$ satisfying $H(C,u)=\langle v_u,u\rangle$. If we indicate the direction vector $u$ by the angle $\theta$ between $u$ and the $x-$axis, we represent each of the direction vectors $u$ of $S^1$ with an angle $\theta\in[0,2\pi]$ where $u=\ut=(\cos\theta,\sin\theta)$. Therefore for all $\theta\in[0,2\pi]$ there exists a boundary element $v_{\theta}=(x_{\theta},y_{\theta})$ of $C$ such that
\begin{equation}
H(C,u_{\theta})=\langle v_{\theta},u_{\theta}\rangle=x_{\theta}\cos\theta+y_{\theta}\sin\theta.
\end{equation}
Hence, we define the function $\rho_C:[0,2\pi]\to\R$ as
\begin{equation}
\rho_C(\theta):=x_{\theta}\cos\theta+y_{\theta}\sin\theta
\end{equation}
which is simply the $\theta\rho$-representation of $C$. This function is well-defined because the inner product $\langle v_{\theta},u_{\theta}\rangle=x_{\theta}\cos\theta+y_{\theta}\sin\theta$ have the same value for all of the boundary points $v_{\theta}$ (corresponding to angle $\theta$) which intersect the supporting line $L(C,u_{\theta})$.
\begin{example}
Consider the set $C=[1,2]\times[1,2]$ and the direction $u_0=(1,0)$ (equi\-valently $\theta=0$). It is easy to see that the equation of supporting line to $C$ in the direction $u_0$ is $x=2$. Hence the intersection of the supporting line with $C$ is the line segment $conv\{(2,1),(2,2)\}$ and $\rho_C(0)=x_0 \cos0+y_0\sin0 =x_0=2$ for all $(x_0,y_0)\in conv\{(2,1),(2,2)\}$.
\end{example}

\begin{figure}[h]\begin{center}
\includegraphics[scale=.5]{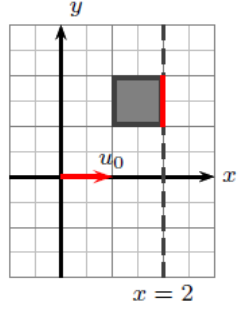}
\end{center}
\caption{Example 1}\label{fig3}
\end{figure}

For simplicity of notation we define the function
\begin{equation}\rho(\theta):=\underset{C\in E^*}{\min}{\rho_C(\theta)}=\min_{C\in E^*}\max_{v\in C}\langle v_{\theta},u_{\theta}\rangle
\end{equation}
and it is obvious that $\rho(\theta)=h(u_{\theta})$.

Let us now express the results of how the sets in an upper exhauster can be reduced using $\theta\rho$-representations.
Firstly, we give the main theorem that leads to obtain practical reduction techniques in the sense of inclusion-minimality.
\begin{theorem}\label{Theo1}
Let $C_0$ be an arbitrary set of $E^*$. Then, $\bar E^*=E^*\setminus\{C_0\}$ is also an upper exhauster of $h$ (which means $C_0$ can be discarded from $E^*$) if and only if for all $\theta\in[0,2\pi]$ there exists a set $C_{\theta}\in \bar E^*$ such that
\begin{equation*}
\rho_{C_{\theta}}(\theta)\leq\rho_{C_0}(\theta)
\end{equation*}
holds.
\end{theorem}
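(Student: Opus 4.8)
The plan is to translate both the hypothesis and the conclusion into statements about the scalar functions $\rho_C(\theta)=H(C,u_\theta)$ and then reduce everything to an elementary property of a pointwise minimum. First I would set $\bar\rho(\theta):=\min_{C\in\bar E^*}\rho_C(\theta)$ and observe that, by the definition of an upper exhauster together with the identity $\rho(\theta)=h(u_\theta)$ recorded just above the statement, the family $\bar E^*$ is an upper exhauster of $h$ if and only if $\bar\rho(\theta)=\rho(\theta)$ for every $\theta\in[0,2\pi]$. This reformulation is what makes the whole argument run, since it converts the exhauster property of $\bar E^*$ into a pointwise equality of two explicit functions.

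Next I would exploit the decomposition $E^*=\bar E^*\cup\{C_0\}$, which yields the pointwise identity $\rho(\theta)=\min\{\bar\rho(\theta),\rho_{C_0}(\theta)\}$ for all $\theta$. Because $\bar E^*\subseteq E^*$, the minimum over the smaller family is never smaller, so $\bar\rho(\theta)\geq\rho(\theta)$ always holds. Consequently the only way the desired equality $\bar\rho(\theta)=\rho(\theta)$ can fail at a given $\theta$ is when the minimum defining $\rho(\theta)$ is attained strictly at $C_0$, that is, when $\rho_{C_0}(\theta)<\bar\rho(\theta)$. This shows that, at each fixed $\theta$, the equality $\bar\rho(\theta)=\rho(\theta)$ is equivalent to the inequality $\bar\rho(\theta)\leq\rho_{C_0}(\theta)$.

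It then remains to connect $\bar\rho(\theta)\leq\rho_{C_0}(\theta)$ with the existence of a witnessing set $C_\theta$. For the direction ``$C_0$ discardable $\Rightarrow$ existence'', I would use that the minimum defining $\bar\rho(\theta)$ is attained by some $C_\theta\in\bar E^*$, so that $\rho_{C_\theta}(\theta)=\bar\rho(\theta)=\rho(\theta)\leq\rho_{C_0}(\theta)$. For the converse, if for each $\theta$ some $C_\theta\in\bar E^*$ satisfies $\rho_{C_\theta}(\theta)\leq\rho_{C_0}(\theta)$, then $\bar\rho(\theta)\leq\rho_{C_\theta}(\theta)\leq\rho_{C_0}(\theta)$, whence $\rho(\theta)=\bar\rho(\theta)$ pointwise and $\bar E^*$ is an exhauster. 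Assembling the two equivalences established above closes the argument.

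The only genuinely delicate point is the attainment of the minimum: writing $\rho(\theta)=\min_{C\in E^*}\rho_C(\theta)$ presumes that the infimum in the definition of an upper exhauster is achieved at each $\theta$, and it is precisely this attainment that lets me produce the \emph{concrete} witness $C_\theta$ demanded by the statement rather than merely an infimum bound. For finite exhausters this is automatic, and more generally it follows from compactness of the sets involved; this is the hypothesis I would flag explicitly and lean on, since the existential phrasing of the theorem rests on it.
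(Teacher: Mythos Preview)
Your argument is correct and mirrors the paper's proof almost exactly: both directions hinge on the identity $\rho(\theta)=\min\{\min_{C\in\bar E^*}\rho_C(\theta),\rho_{C_0}(\theta)\}$ together with the attainment of the inner minimum at some $C_\theta\in\bar E^*$. Your explicit introduction of $\bar\rho$ and your remark about attainment are minor organizational additions, not a different route.
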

\begin{proof}
Let $\bar E^*=E^*\setminus\{C_0\}$ be also upper exhauster of $h$ which means
\[\rho(\theta)=\underset{C\in \bar E^*}{\min}{\rho_C(\theta)}.\]
By the definition of the function $\rho$ we see that $\rho(\theta)\leq\rho_{C_0}(\theta)$ for all $\theta\in[0,2\pi]$. Hence
\begin{equation}\label{Eq4}
\underset{C\in \bar E^*}{\min}{\rho_C(\theta)}\leq\rho_{C_0}(\theta), \forall\theta\in[0,2\pi].
\end{equation}
On the other hand for an arbitrary $\theta\in[0,2\pi]$ there exists a set $C_{\theta}\in \bar E^*$ such that
\begin{equation}\label{Eq5}
\underset{C\in \bar E^*}{\min}{\rho_C(\theta)}=\rho_{C_{\theta}}(\theta).
\end{equation}
Thus from (\ref{Eq4}) and (\ref{Eq5}) we find $C_{\theta}\in \bar E^*$ satisfying $\rho_{C_{\theta}}(\theta)\leq\rho_{C_0}(\theta)$ for all $\theta\in[0,2\pi]$.

Conversely, assume that for all $\theta\in[0,2\pi]$ there exists a set $C_{\theta}\in \bar E^*$ such that
\begin{equation}\label{Eq6}
\rho_{C_{\theta}}(\theta)\leq\rho_{C_0}(\theta)
\end{equation} To prove that $\bar E^*$ is also an upper exhauster of $h$ we need to show that \linebreak $\rho(\theta)=\underset{C\in \bar E^*}{\min}{\rho_C(\theta)}$ for all $\theta\in[0,2\pi]$. For an arbitrary $\theta\in[0,2\pi]$ we consider the set $C_{\theta}$ satisfying (\ref{Eq6}) by hypothesis. Since this $C_{\theta}$ is an element of $\bar E^*$ we have
\begin{equation*}
\underset{C\in \bar E^*}{\min}{\rho_C(\theta)}\leq \rho_{C_{\theta}}(\theta)\leq \rho_{C_0}(\theta).
\end{equation*}
Hence we obtain
\begin{equation*}
\rho(\theta)=\min\{\underset{C\in \bar E^*}{\min}{\rho_C(\theta)},\rho_{C_0}(\theta)\}=\underset{C\in \bar E^*}{\min}{\rho_C(\theta)}
\end{equation*}
which completes the proof.
\end{proof}
In order to obtain smaller exhausters by inclusion we present various techniques in the light of Theorem \ref{Theo1}.
\begin{corollary}\label{cor1}
For a set $C_0\in E^*$ if
\[\rho_{C_0}(\theta)>\rho(\theta),\ \text{for all }\theta\in[0,2\pi]\]
is satisfied, then $\bar E^*=E^*\setminus\{C_0\}$ is also an upper exhauster of $h$, that means $C_0$ can be discarded from $E^*$.
\end{corollary}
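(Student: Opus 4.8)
The plan is to deduce this directly from the sufficiency direction of Theorem \ref{Theo1}. That theorem tells us that $\bar E^*=E^*\setminus\{C_0\}$ is again an upper exhauster of $h$ precisely when, for every $\theta\in[0,2\pi]$, some set $C_\theta\in\bar E^*$ satisfies $\rho_{C_\theta}(\theta)\leq\rho_{C_0}(\theta)$. So the whole task reduces to verifying that the standing hypothesis $\rho_{C_0}(\theta)>\rho(\theta)$ for all $\theta$ produces such a $C_\theta$ at each angle.

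First I would fix an arbitrary $\theta\in[0,2\pi]$ and recall that $\rho(\theta)=\underset{C\in E^*}{\min}\,\rho_C(\theta)$. Since by hypothesis $\rho_{C_0}(\theta)>\rho(\theta)$, the set $C_0$ cannot be one at which this minimum is attained; the minimal value $\rho(\theta)$ must therefore be realized by some set lying in $\bar E^*$. Choosing $C_\theta\in\bar E^*$ with $\rho_{C_\theta}(\theta)=\rho(\theta)$, the strict inequality gives $\rho_{C_\theta}(\theta)=\rho(\theta)<\rho_{C_0}(\theta)$, so in particular $\rho_{C_\theta}(\theta)\leq\rho_{C_0}(\theta)$, which is exactly the condition required by Theorem \ref{Theo1}.

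Since $\theta$ was arbitrary, the condition holds for every $\theta\in[0,2\pi]$, and Theorem \ref{Theo1} immediately yields that $\bar E^*$ is an upper exhauster of $h$, that is, $C_0$ may be discarded.

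The argument is almost immediate once Theorem \ref{Theo1} is in hand; the only point that requires a little care is the attainment of the minimum defining $\rho(\theta)$. If $E^*$ is finite this is automatic, and the min-formulation for $\rho$ used throughout this section already presupposes it. Should one wish to treat an infinite family, the same step goes through verbatim with the infimum: from $\underset{C\in E^*}{\inf}\,\rho_C(\theta)=\rho(\theta)<\rho_{C_0}(\theta)$ one extracts, directly from the definition of the infimum, a single $C_\theta\in\bar E^*$ with $\rho_{C_\theta}(\theta)<\rho_{C_0}(\theta)$, which is all that Theorem \ref{Theo1} needs. Thus I do not expect any genuine obstacle here — the corollary is essentially a specialization of the sufficient condition, with the strict inequality doing the work of guaranteeing that the discarded set is never the unique minimizer.
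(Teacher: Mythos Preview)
Your proposal is correct and follows essentially the same approach as the paper: both arguments fix an arbitrary $\theta$, take $C_\theta$ to be a minimizer of $\rho_C(\theta)$ over $E^*$, use the strict inequality $\rho_{C_0}(\theta)>\rho(\theta)$ to conclude $C_\theta\neq C_0$ (hence $C_\theta\in\bar E^*$), and then invoke Theorem~\ref{Theo1}. Your additional remark on attainment of the minimum for infinite families is a small refinement not present in the paper's proof.
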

\begin{proof}
Let $\theta\in[0,2\pi]$ be an arbitrary direction. Since $\rho(\theta)=\underset{C\in E^*}{\min}{\left\{\underset{v\in C}{\max}{\langle v,u_{\theta}\rangle}\right\}}$ there exists a set $C_{\theta}$ that produces the minimum value, i.e.,
\begin{equation*}
C_{\theta}:=\underset{C\in E^*}{\argmin}{\left\{\underset{v\in C}{\max}{\langle v,u_{\theta}\rangle}\right\}}.
\end{equation*}
Then by hypothesis we have
\begin{equation*}
\rho_{C_0}(\theta)>\rho(\theta)=\rho_{C_{\theta}}(\theta)
\end{equation*}
and it is clear that $C_{\theta}\neq C_0$. Therefore for all $\theta\in[0,2\pi]$ we find a set $C_{\theta}\in \bar E^*$ such that
\begin{equation*}
\rho_{C_{\theta}}(\theta)<\rho_{C_0}(\theta)
\end{equation*} which means $\bar E^*$ is also an upper exhauster of $h$, by Theorem \ref{Theo1}.
\end{proof}

\begin{example}
Consider the function
$$\begin{array}{l}
h(x,y)=\min\{\max\{x+y,-x+y,0\};
 x+2y; \max\{-2x+y,-2x+2y\}; -x-y; \max\{-x+y,-x+2y\}\ \}.
\end{array}$$
The family $E^*=\{C_0,C_1,C_2,C_3,C_4\}$ is an upper exhauster of $h$ where \linebreak $C_0=conv\{(0,0),(1,1),(-1,1)\}, C_1=\{(1,2)\}, C_2=conv\{(-2,1),(-2,2)\}$, \linebreak $C_3=\{(-1,1)\}$ and $C_4=conv\{(-1,1),(-1,2)\}$ (see Fig. \ref{fig4}(a)). It is clear that
\[\rho_{C_0}(\theta)>\rho(\theta),\ \text{for all }\theta\in[0,2\pi]\]
as seen in Fig. \ref{fig4}(b)-(c). Hence, $C_0$ can be discarded from $E^*$ by Corollary \ref{cor1}.
\end{example}

\begin{figure}[h]
\begin{tabular}{rcl}
\includegraphics[scale=.5]{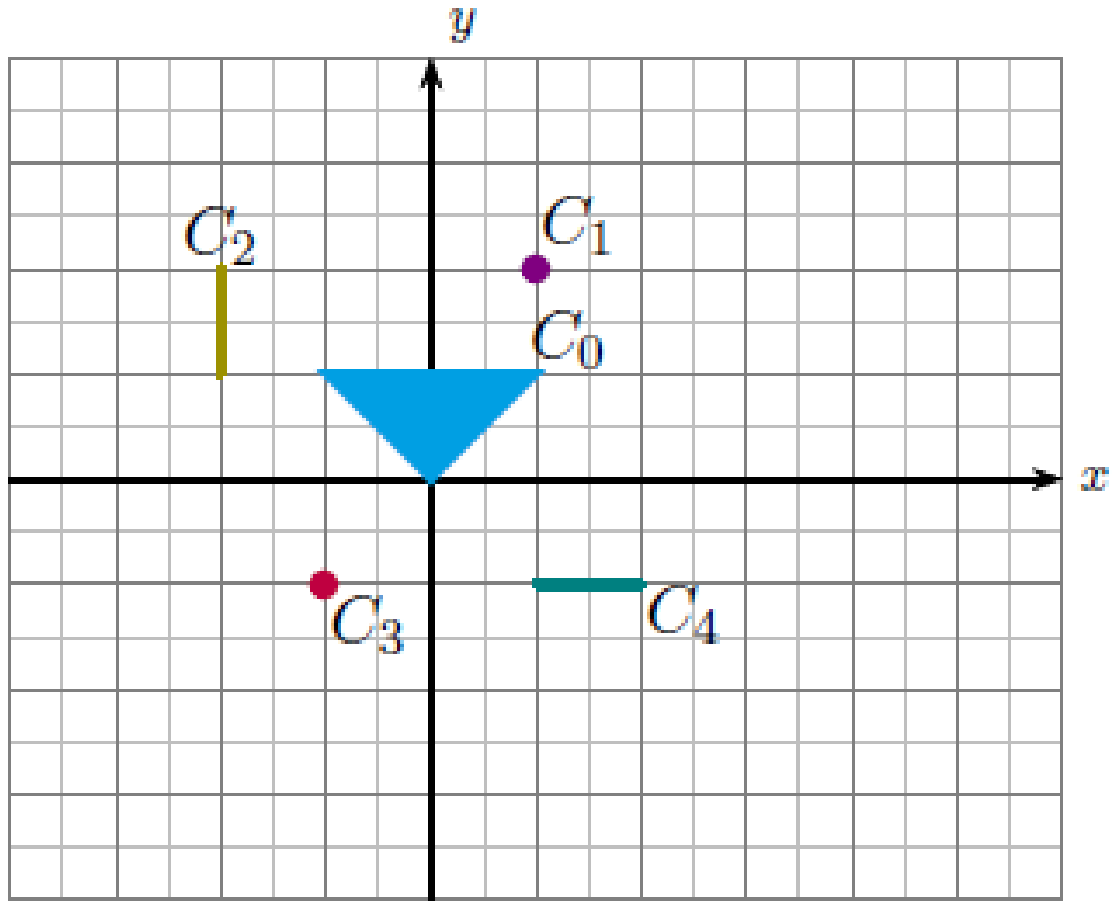}
&
\hspace{-.4cm}
\includegraphics[scale=.5]{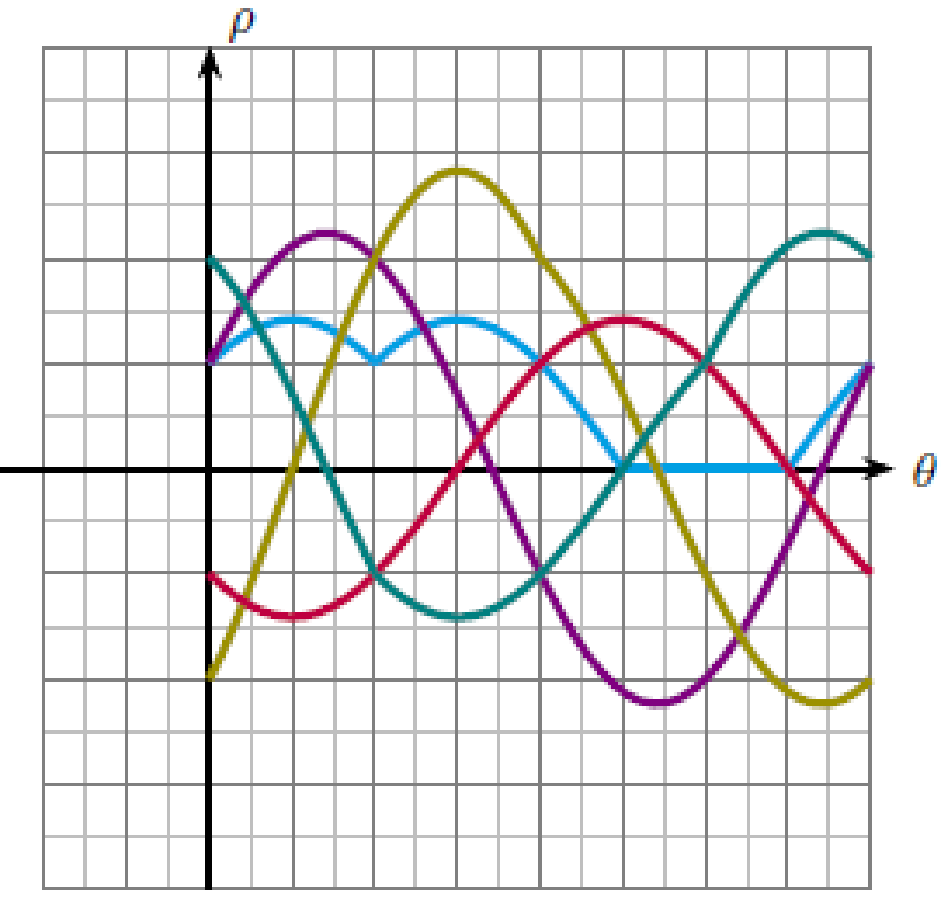}
&
\hspace{-.4cm}
\includegraphics[scale=.5]{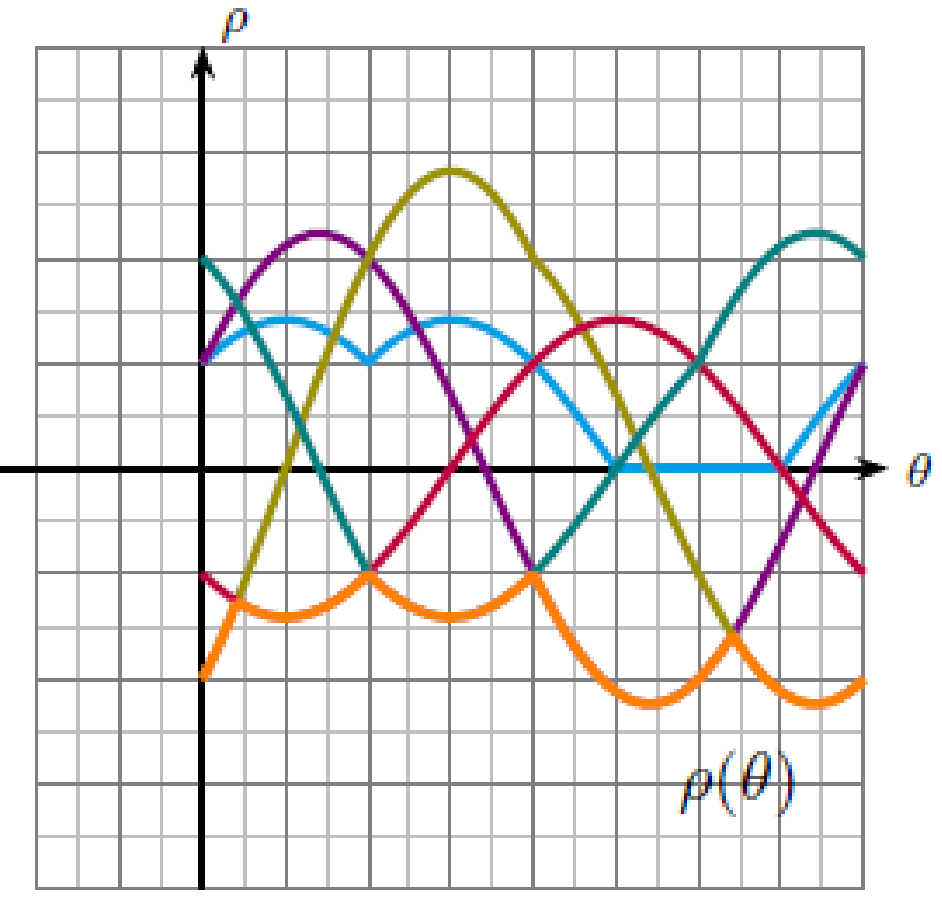}
\end{tabular}
\caption{(a) The sets belonging to the upper exhauster $E^*$ of Example 2. (b) $\theta\rho$-representations of the sets $C_i$ for $i=0,\ldots,4$. (c) $\rho_{C_0}(\theta)>\rho(\theta),\ \text{for all }\theta\in[0,2\pi]$.}\label{fig4}
\end{figure}

\begin{corollary}\label{Sonuc1}
For a set $C_0\in E^*$ if there exists a set $\tilde{C}\in\bar E^*$ such that
\begin{equation}\label{Eq7}
\rho_{\tilde C}(\theta)\leq\rho_{C_0}(\theta),\ \text{for all } \theta\in[0,2\pi]
\end{equation}
holds, then $\bar E^*=E^*\setminus\{C_0\}$ is also an upper exhauster of $h$.
\end{corollary}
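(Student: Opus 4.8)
The plan is to derive this directly from Theorem \ref{Theo1}, since the hypothesis furnishes a single set that uniformly dominates $C_0$. The crucial observation is that Theorem \ref{Theo1} only requires, for each direction $\theta$, the existence of \emph{some} set $C_{\theta}\in\bar E^*$ satisfying $\rho_{C_{\theta}}(\theta)\leq\rho_{C_0}(\theta)$; here the same set $\tilde C$ will serve for every $\theta$ simultaneously, so the direction-dependent selection in the theorem collapses to a constant choice.

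First I would fix an arbitrary $\theta\in[0,2\pi]$ and set $C_{\theta}:=\tilde C$. Since $\tilde C\in\bar E^*$ by assumption, this is an admissible member of $\bar E^*$. The hypothesis (\ref{Eq7}) then gives immediately
\[
\rho_{C_{\theta}}(\theta)=\rho_{\tilde C}(\theta)\leq\rho_{C_0}(\theta).
\]
As $\theta$ was arbitrary, the condition of Theorem \ref{Theo1} is verified with this constant choice $C_{\theta}=\tilde C$, and therefore $\bar E^*=E^*\setminus\{C_0\}$ is an upper exhauster of $h$.

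There is essentially no obstacle in this argument: the statement is simply the specialization of Theorem \ref{Theo1} to the case in which one fixed member of $\bar E^*$ dominates $C_0$ over the whole circle, rather than possibly different sets dominating in different directions. Geometrically, condition (\ref{Eq7}) asserts that the $\theta\rho$-representation of $\tilde C$ lies entirely on or below that of $C_0$, so $C_0$ never attains the pointwise minimum $\rho(\theta)=\underset{C\in E^*}{\min}\,\rho_C(\theta)$ and may be discarded without altering $h$.
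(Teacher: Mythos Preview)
Your proof is correct and follows exactly the paper's approach: the paper's proof is the single sentence ``If we take $C_{\theta}$ as $\tilde C$ for all $\theta\in[0,2\pi]$, then $C_0$ can be discarded from $E^*$ by Theorem \ref{Theo1},'' which is precisely the constant-choice argument you spell out.
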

\begin{proof}
If we take $C_{\theta}$ as $\tilde C$ for all $\theta\in[0,2\pi]$, then $C_0$ can be discarded from $E^*$ by Theorem \ref{Theo1}.
\end{proof}
\begin{remark}
The inequality (\ref{Eq7}) is equivalent to $H(\tilde C,u_{\theta})\leq H(C_0,u_{\theta})$ for all $u_{\theta}\in S^1$ which means $\tilde C\subseteq C_0$. Therefore, Corollary \ref{Sonuc1} means that any superset of a set $C_0\in E^*$ can be removed from the exhauster. This result was stated in \cite[Theorem 4.3(i)]{ros2} and here we restate it in a different way.
\end{remark}

\begin{corollary}\label{cor3}
Let $C_0\in E^*$, $n\in\mathbb N^+$ and $\mathcal P=\{\alpha_0,\alpha_1,\ldots,\alpha_n\}$ be a partition of $[0,2\pi]$ where $0=\alpha_0<\alpha_1<\ldots<\alpha_n=2\pi$. Consider the family of intervals $\mathcal B=\{B_j=[\alpha_j,\alpha_{j+1}]\ |\ j=0,1,\ldots,n-1\}$ generated by $\mathcal P$. For all $B_j\in\mathcal B$ if there exists a set $C_j\in \bar E^*=E^*\setminus \{C_0\}$ satisfying
\[\rho_{C_j}(\theta)\leq\rho_{C_0}(\theta),\ \text{for all }\theta\in B_j\]
then $C_0$ can be discarded from $E^*$, hence $\bar E^*$ is also an upper exhauster of $h$.
\end{corollary}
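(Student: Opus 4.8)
The plan is to verify the hypothesis of Theorem~\ref{Theo1} and then invoke it directly. By Theorem~\ref{Theo1}, to conclude that $\bar E^*=E^*\setminus\{C_0\}$ is an upper exhauster of $h$ it suffices to produce, for every $\theta\in[0,2\pi]$, a single set $C_\theta\in\bar E^*$ with $\rho_{C_\theta}(\theta)\le\rho_{C_0}(\theta)$. So the whole argument reduces to assigning to each direction $\theta$ a suitable set from $\bar E^*$, using only the local data supplied by the partition.

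First I would record that the family $\mathcal B$ covers the whole interval: since $0=\alpha_0<\alpha_1<\dots<\alpha_n=2\pi$, we have $\bigcup_{j=0}^{n-1}B_j=[0,2\pi]$. Hence, given an arbitrary $\theta\in[0,2\pi]$, there is at least one index $j\in\{0,1,\dots,n-1\}$ with $\theta\in B_j$. Fixing such a $j$, the hypothesis of the corollary furnishes a set $C_j\in\bar E^*$ satisfying $\rho_{C_j}(\theta)\le\rho_{C_0}(\theta)$ for every point of $B_j$, and in particular for our chosen $\theta$. Setting $C_\theta:=C_j$, I obtain a set $C_\theta\in\bar E^*$ with $\rho_{C_\theta}(\theta)\le\rho_{C_0}(\theta)$, which is exactly the condition required by Theorem~\ref{Theo1}. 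Since $\theta$ was arbitrary, Theorem~\ref{Theo1} applies and $\bar E^*$ is an upper exhauster of $h$, that is, $C_0$ can be discarded from $E^*$.

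The argument is essentially bookkeeping, so I do not expect any serious obstacle; the only point that needs a word of care is the behaviour at the partition nodes. A node $\alpha_j$ with $0<j<n$ lies in both $B_{j-1}$ and $B_j$, so the witnessing set assigned to that direction need not be unique. This causes no difficulty: the condition in Theorem~\ref{Theo1} is purely pointwise and existential, so for each such $\theta$ we may simply select any one of the available sets. Thus the overlap of the closed intervals at their common endpoints is harmless, and the covering property $\bigcup_{j}B_j=[0,2\pi]$ is all that the proof needs beyond the stated hypothesis.
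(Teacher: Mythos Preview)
Your proof is correct and follows essentially the same approach as the paper: pick an arbitrary $\theta$, locate it in some $B_j$ via the covering property of the partition, take the corresponding $C_j\in\bar E^*$ as the witness $C_\theta$, and invoke Theorem~\ref{Theo1}. Your additional remark about the non-uniqueness at partition nodes is a harmless clarification not present in the paper's proof.
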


\begin{proof}
Take an arbitrary $\theta\in[0,2\pi]$. Since $\mathcal P$ be a partition of $[0,2\pi]$, then there exists $B_j^{\theta}\in\mathcal B$ such that $\theta\in B_j^{\theta}$. By the hypothesis, for this $B_j^{\theta}$ there exists a set $C_j^{\theta}\in\bar E^*$ satisfying
\[\rho_{C_j^{\theta}}(\theta)\leq \rho_{C_0}(\theta), \ \forall \theta\in B_j^{\theta}.\]
Thus if we consider $C_j^{\theta}$ as $C_{\theta}$ in Theorem \ref{Theo1}, it is clear that $C_0$ can be discarded from $E^*$.
\end{proof}
\begin{example}
Let us consider the function
$$\begin{array}{l}
h(x,y)=\min\{\max\{0,2x,2y,2x+2y\};\max\{0,-x,-y\};-2x-2y\}
\end{array}
$$
The family $E^*=\{C_0,C_1,C_2\}$ is an upper exhauster of $h$ where
$$\begin{array}{c}
 C_0=conv\{(0,0),(-1,0),(0,-1)\},  C_1=\{(-2,-2)\}, C_2=conv\{(0,0),(2,0),(0,2),(2,2)\}
\end{array}$$
\end{example}

\begin{figure}
\begin{tabular}{ccc}
\includegraphics[scale=.5]{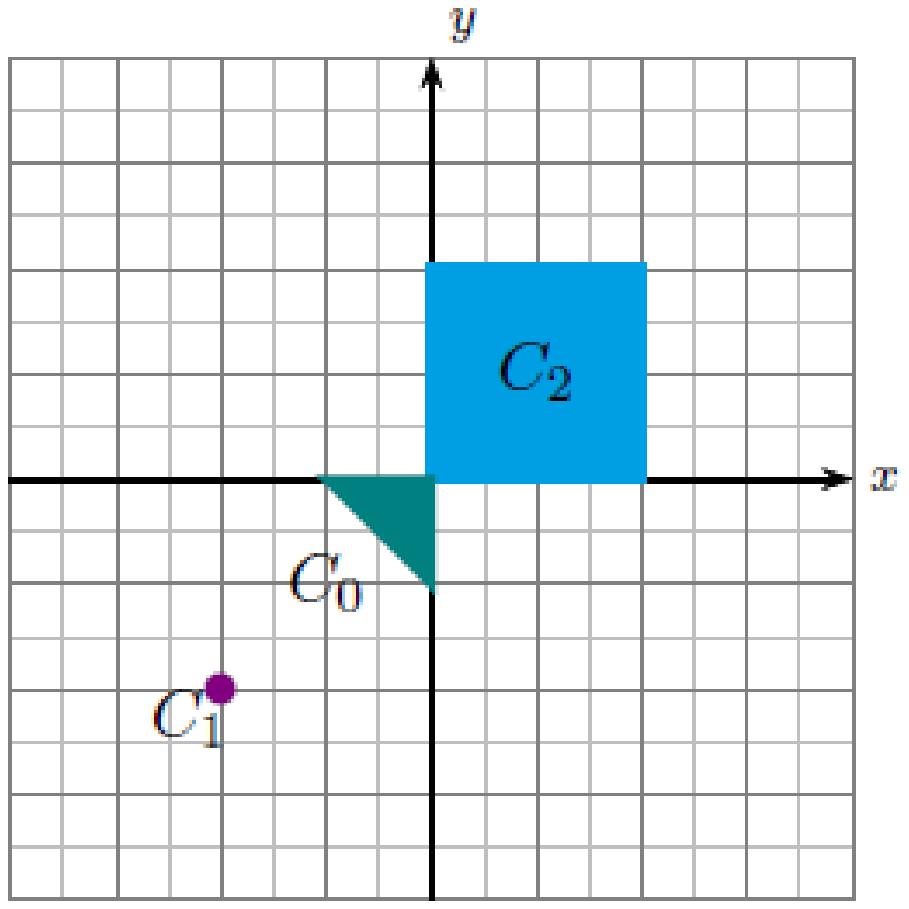}
&
\hspace{-.4cm}
\includegraphics[scale=.5]{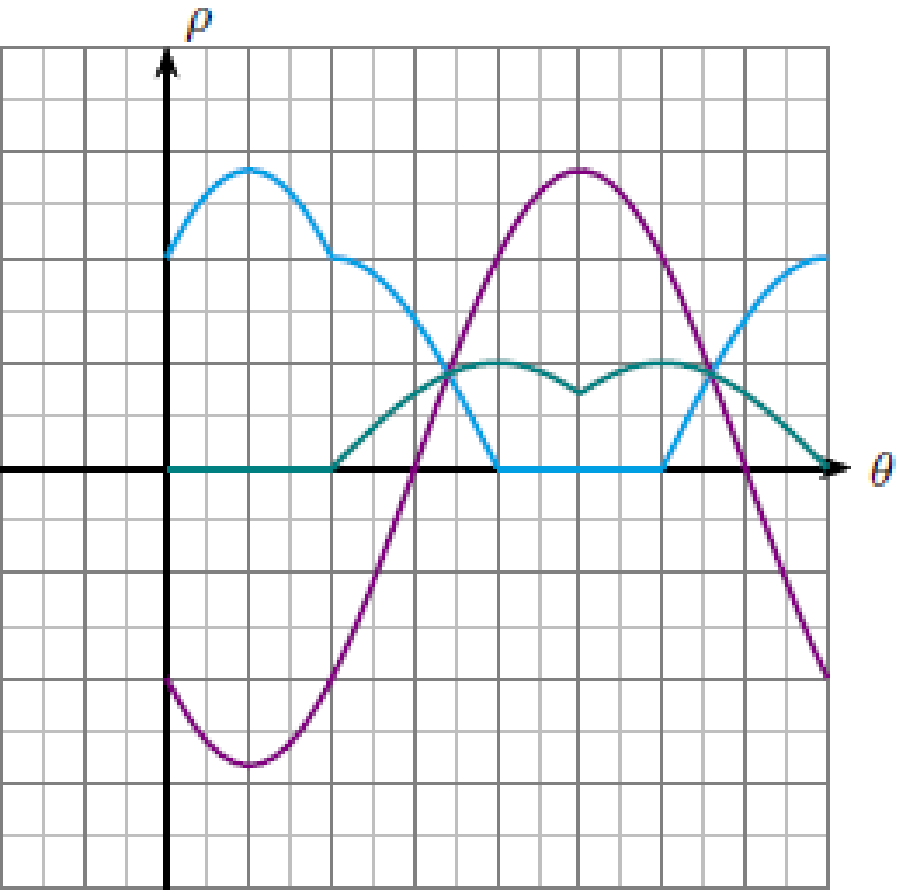}
&
\hspace{-.4cm}
\includegraphics[scale=.5]{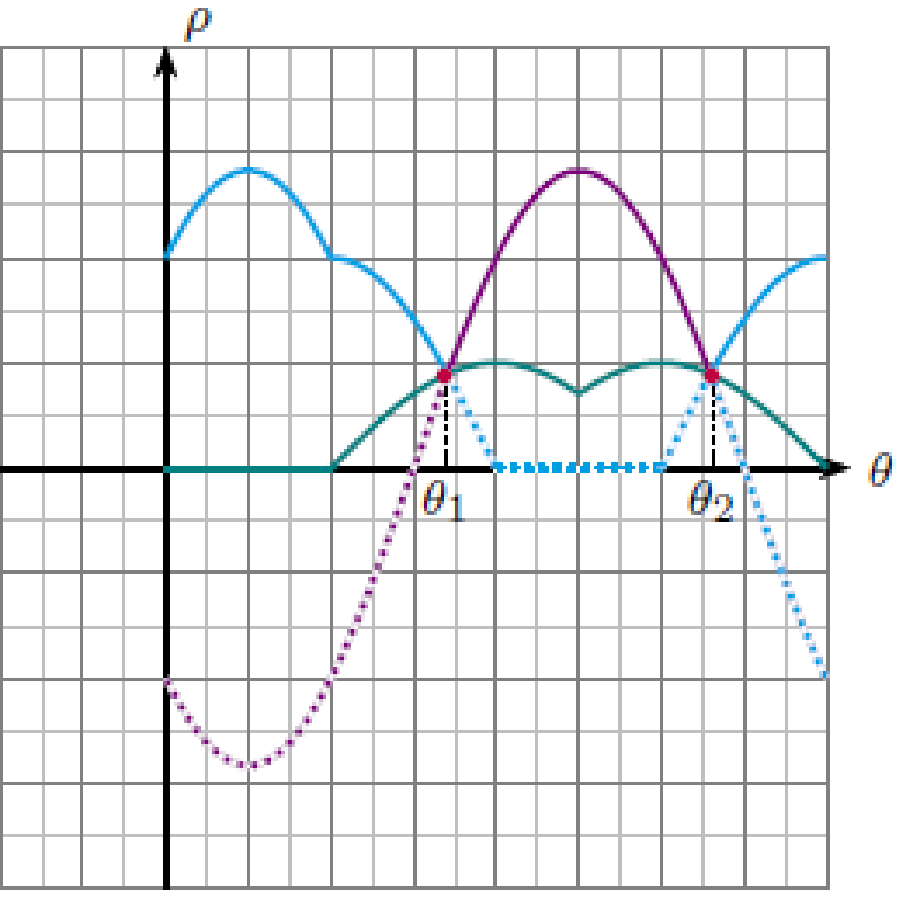}
\end{tabular}
\caption{(a) The sets belonging to the upper exhauster $E^*$ of Example 3. (b) $\theta\rho$-representations of the sets $C_0, C_1$ and $C_2$. (c) The set $C_0$ can be discarded by Corollary \ref{cor3}.}\label{fig5}
\end{figure}

(see Fig.  \ref{fig5}(a)).  One can obtain with some basic calculations that
$$\begin{array}{ll}
\rho_{C_1}(\theta)\leq\rho_{C_0}(\theta) & ,\forall \theta\in[0,\theta_1]\\
\rho_{C_2}(\theta)\leq\rho_{C_0}(\theta)& ,\forall \theta\in[\theta_1,\theta_2]\\
\rho_{C_1}(\theta)\leq\rho_{C_0}(\theta)& ,\forall \theta\in[\theta_2,2\pi]\\
\end{array}$$
where $\theta_1\approx2,68\pi,\ \theta_2\approx5.17\pi$ and $\mathcal B=\{[0,\theta_1], [\theta_1,\theta_2],[\theta_2,2\pi]\}$ is a partition of $[0,2\pi]$ as seen in Fig. \ref{fig5}(b)-(c).
Therefore, we can discard the set $C_0$ by Corollary \ref{cor3} that means  $\bar E^*=\{C_1,C_2\}$ is an upper exhauster of $h$ which is smaller by inclusion.
\newpage
In the following theorem, we present a sufficient condition for the impossibility of discarding a set from an upper exhauster.
\begin{theorem}\label{teo2}
Let $h$ be p.h. function, $E^*$ be an upper exhauster of $h$, $C_0\in E^*$ and assume that $E^*$ does not contain any proper subset of $C_0$. If there exist some $\theta_1, \ \theta_2\in[0,2\pi]$, $\theta_1<\theta_2$ such that
\[\rho_{C_0}(\theta)=\rho(\theta),\ \text{for all }\theta\in[\theta_1,\theta_2]\]
then $\bar E^*=E^*\setminus \{C_0\}$ is not an upper exhauster of $h$.
\end{theorem}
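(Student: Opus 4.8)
The plan is to argue by contradiction using the contrapositive of Theorem \ref{Theo1}. Suppose, toward a contradiction, that $\bar E^* = E^*\setminus\{C_0\}$ \emph{is} an upper exhauster of $h$. By Theorem \ref{Theo1}, for every $\theta\in[0,2\pi]$ there would exist a set $C_\theta\in\bar E^*$ with $\rho_{C_\theta}(\theta)\le\rho_{C_0}(\theta)$. Restricting attention to the interval $[\theta_1,\theta_2]$ on which $\rho_{C_0}(\theta)=\rho(\theta)=\min_{C\in E^*}\rho_C(\theta)$, the combined inequalities $\rho_{C_\theta}(\theta)\le\rho_{C_0}(\theta)=\rho(\theta)\le\rho_{C_\theta}(\theta)$ force $\rho_{C_\theta}(\theta)=\rho_{C_0}(\theta)$ for each such $\theta$. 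So on $[\theta_1,\theta_2]$ the value $\rho_{C_0}$ is attained, pointwise, by sets drawn from $\bar E^*$.

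From here I would leverage the structure of $\theta\rho$-representations as piecewise sinusoidal curves of the form \eqref{Eq3}. The key local fact is that on any subinterval where $\rho_{C_0}$ agrees with some single $\rho_{\tilde C}$ for $\tilde C\in\bar E^*$, the two supporting-line structures coincide: since each $\rho_C$ is the upper envelope of finitely many sine curves $\Lambda\sin(\theta+\phi)$ corresponding to the vertices of $C$, agreement of $\rho_{\tilde C}$ and $\rho_{C_0}$ on a nondegenerate subinterval means they share a common vertex sinusoid there. The plan is to show that the pointwise covering of $\rho_{C_0}$ on $[\theta_1,\theta_2]$ by members of $\bar E^*$ can be reorganised, on each piece where $\rho_{C_0}$ is given by a single vertex curve, into agreement with one fixed set of $\bar E^*$ whose boundary locally touches that of $C_0$. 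Comparing support functions then yields, via \eqref{Eq2} and the remark following Corollary \ref{Sonuc1}, that the relevant boundary arc of such a set lies inside $C_0$, and assembling these arcs over $[\theta_1,\theta_2]$ produces a set (or the forced conclusion that some $\tilde C\in\bar E^*$ has $\rho_{\tilde C}\le\rho_{C_0}$ globally) exhibiting a proper subset of $C_0$ inside $E^*$, contradicting the hypothesis that $E^*$ contains no proper subset of $C_0$.

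The main obstacle is passing from the \emph{pointwise} choice of a covering set $C_\theta$ (which may jump from one member of $\bar E^*$ to another as $\theta$ varies across $[\theta_1,\theta_2]$) to a statement about a \emph{single} set. A priori the equality $\rho_{C_\theta}(\theta)=\rho_{C_0}(\theta)$ could be realised by different sets on a dense or pathological subset of angles, so the argument must use that each $\rho_C$ is piecewise sinusoidal with finitely many pieces to guarantee that at least one fixed $\tilde C\in\bar E^*$ coincides with $\rho_{C_0}$ on some nondegenerate subinterval $[\theta_1',\theta_2']\subseteq[\theta_1,\theta_2]$; a compactness/finiteness (pigeonhole) step over the finitely many breakpoints of all the curves secures this. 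The delicate point is ensuring that the resulting coincidence is enough to produce a genuine proper subset of $C_0$ rather than $C_0$ itself, which is where the hypothesis $\rho_{C_0}=\rho$ on a full subinterval (as opposed to a single direction) becomes essential: it pins down a boundary \emph{arc} of $C_0$, not just one supporting line, and this arc being reproduced by another set forces that other set to cut strictly inside $C_0$ in the complementary directions.
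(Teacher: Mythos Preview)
Your proposal has a genuine gap at the decisive step. You correctly observe that if $\bar E^*$ were an upper exhauster then for each $\theta\in[\theta_1,\theta_2]$ some $C_\theta\in\bar E^*$ satisfies $\rho_{C_\theta}(\theta)=\rho_{C_0}(\theta)$, and you propose to upgrade this, via the piecewise-sinusoidal structure, to a single $\tilde C\in\bar E^*$ with $\rho_{\tilde C}=\rho_{C_0}$ on a nondegenerate subinterval. But even granting this (which already requires $E^*$ finite and the sets polygonal, neither of which is assumed), the conclusion you draw is false: coincidence of support functions on an arc of directions does \emph{not} force $\tilde C\subsetneq C_0$. Two convex bodies that share a boundary arc can be in general position elsewhere---for instance $C_0=[0,1]^2$ and $\tilde C=[0,2]\times[0,1]$ share an edge, so $\rho_{\tilde C}=\rho_{C_0}$ on an interval of directions, yet $\tilde C\not\subseteq C_0$. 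Your claims about ``assembling arcs'' and the other set ``cutting strictly inside $C_0$ in the complementary directions'' are therefore unjustified, and the argument never engages the no-proper-subset hypothesis.

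The paper's route is simpler and avoids this trap. Instead of hunting for one $\tilde C$ on a subinterval, it fixes an \emph{arbitrary} $C\in\bar E^*$ and argues directly: on $[\theta_1,\theta_2]$ one has $\rho_{C_0}=\rho\le\rho_C$, so either strict inequality already occurs at some $\theta_C$ there, or else $\rho_{C_0}=\rho_C$ on the entire interval. In the latter case, if additionally $\rho_C\le\rho_{C_0}$ held throughout the complement $[0,2\pi]\setminus[\theta_1,\theta_2]$, then $\rho_C\le\rho_{C_0}$ globally, i.e.\ $C\subseteq C_0$; since $C\ne C_0$ this would exhibit a proper subset of $C_0$ in $E^*$, contradicting the hypothesis. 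Hence for every $C\in\bar E^*$ there is some $\theta_C$ with $\rho_{C_0}(\theta_C)<\rho_C(\theta_C)$, and $C_0$ cannot be discarded. This invokes the no-proper-subset assumption in one stroke, needs no finiteness, polygonality, or pigeonhole, and proceeds set-by-set rather than trying to manufacture a single covering witness.
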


\begin{proof}
Let $C$ be an arbitrary set from $\bar E^*$. Since $\rho(\theta)\leq \rho_C(\theta)$ for all $\theta\in[0,2\pi]$ and by the assumption there exists a closed interval $[\theta_1,\theta_2]\subseteq [0,2\pi]$ such that
\begin{equation}\label{Eq8}
\rho_{C_0}(\theta)=\rho(\theta)\leq \rho_C(\theta), \ \forall\theta\in[\theta_1,\theta_2].
\end{equation}
In this case, there exists either a $\theta_C\in[\theta_1,\theta_2]$ such that
\begin{equation}\label{Eq9}
\rho_{C_0}(\theta_C)=\rho(\theta_C)<\rho_C(\theta_C)
\end{equation}
or there exists a $\theta_C\in[0,2\pi]\setminus[\theta_1,\theta_2]$ satisfying (\ref{Eq9}). In order to prove this argument, assume that the interval $[\theta_1,\theta_2]$ does not contain any element satisfying (\ref{Eq9}). Therefore it is clear from (\ref{Eq8}) that
\begin{equation}\label{Eq10}
\rho_{C_0}(\theta)=\rho(\theta)= \rho_C(\theta), \ \forall\theta\in[\theta_1,\theta_2].
\end{equation}
If we consider the set $M:=[0,2\pi]\setminus[\theta_1,\theta_2]$ then there exists a $\theta_C\in M$ such that $\rho_{C_0}(\theta_C)<\rho_C(\theta_C)$. Because if there was no such $\theta_C$, in other words if $\rho_{C_0}(\theta)\geq\rho_C(\theta)$ was satisfied for all $\theta\in M$ then with (\ref{Eq10}) we would get
\[\rho_{C_0}(\theta)\geq\rho_C(\theta), \ \forall\theta\in[0,2\pi]\]
that means $C\subseteq C_0$ which contradicts the assumption.
Thus for an arbitrary $C\in E^*\setminus\{C_0\}$ there exists a $\theta_C\in[0,2\pi]$ such that
\begin{equation*}
\rho_{C_0}(\theta_C)<\rho_C(\theta_C)
\end{equation*}
that means $C_0$ can not be discarded from $E^*$ and so $\bar E^*$ is not an upper exhauster of $h$.

Conversely, assume that $\bar E^*$ is not an upper exhauster of $h$. If we set the function
\begin{equation*}
\bar\rho(\theta):=\underset{C\in\bar E^*}{\min}{\underset{v\in C}{\max}{\langle v,u_{\theta}\rangle}}
\end{equation*}
then it is obvious that there exists a $\bar\theta\in[0,2\pi]$ such that
\begin{equation}\label{Eq11}
h(u_{\bar\theta})=\rho(\bar\theta)<\bar\rho(\bar \theta).
\end{equation}
Since $\rho(\bar\theta)=\min\{\bar\rho(\bar\theta),\rho_{C_0}(\bar\theta)\}$ we obtain
\begin{equation*}
\rho_{C_0}(\bar\theta)=\rho(\bar\theta)<\bar\rho(\bar\theta)
\end{equation*}
with the inequality in (\ref{Eq11}). In addition, since the functions $\rho_{C_0}$ and $\bar\rho$ are continuous functions as piecewise sinusoidal curves there exists an interval $[\theta_1,\theta_2]\subset[0,2\pi]$ containing $\bar\theta$ such that
\[\rho_{C_0}(\theta)<\bar\rho(\theta), \ \forall \theta\in [\theta_1,\theta_2].\]
By the definition of function $\bar\rho$ it is clear that $\bar\rho(\theta)\leq\rho_C(\theta)$ on the interval $[\theta_1,\theta_2]$ for all $C\in\bar E^*$. Hence for all $C\in\bar E^*$ we get $\rho_{C_0}<\rho_C$ on $[\theta_1,\theta_2]$ which means that there is no other set of $\bar E^*$ that gives the minimum value to $\rho$ on $[\theta_1,\theta_2]$. In other words, $C_0$ is the unique set that affects the minimum value of $\rho(\theta)$ on this interval. Thus it follows immediately that
\[\rho_{C_0}(\theta)=\rho(\theta), \ \forall \theta\in[\theta_1,\theta_2]\]
which completes the proof.
\end{proof}

\begin{example}\cite[Example 5]{ab1}\label{ex1}
Consider the function $$h(x,y)=\min\{\max\{-x,x,-x+y\};\max\{-2x,-2y,-2x-2y\};\linebreak \max\{3x,-3y,3x-3y\};\max\{4x,4y,4x+4y\}\}.$$ The family $E^*=\{C_1,C_2,C_3,C_4\}$ is an upper exhauster of $h$ where
$$\begin{array}{l}
C_1=conv\{(-1,0),(0,1),(-1,1)\},  C_2=conv\{(-2,0),(0,-2),(-2,-2)\}\\
C_3=conv\{(3,0),(0,-3),(3,-3)\}, C_4=conv\{(4,0),(0,4),(4,4)\} \text{  (see Fig. \ref{fig6}(a))}.
\end{array}$$

As seen in Fig. \ref{fig6}(b) and (c), we deduce from $\theta\rho$-representations of the sets $C_1,\ C_2,\ C_3$ and $C_4$, there exist some intervals for all of these four sets satisfying the equalities

$\rho_{C_1}(\theta)=\rho(\theta),\ \text{for all }\theta\in[0,\frac{\pi}{2}]$

$\rho_{C_2}(\theta)=\rho(\theta),\ \text{for all }\theta\in[\frac{\pi}{2},\pi]$

$\rho_{C_3}(\theta)=\rho(\theta),\ \text{for all }\theta\in[\pi,\frac{3\pi}{2}]$

$\rho_{C_4}(\theta)=\rho(\theta),\ \text{for all }\theta\in[\frac{3\pi}{2},2\pi].$

Therefore none of the sets can be discarded from $E^*$, by Theorem \ref{teo2}.
\end{example}

In the following theorem we present a necessary and sufficient condition for the inclusion-minimality of an upper exhauster. This theorem means that an exhauster of which all elements contribute to the pointwise minimum of the curves $\rho_C$ is minimal by inclusion.

\begin{theorem}\label{teo3}
Let $E^*$ be an upper exhauster of a p.h. function $h$ and assume that elements of $E^*$ do not contain each other. Then, $E^*$ is minimal by inclusion if and only if for all $C\in E^*$ there exists a closed interval $I_C=[\alpha_C,\beta_C]\subset[0,2\pi]$ such that
\begin{equation}\label{Eq11}
\rho_C(\theta)=\rho(\theta),\ \forall\theta\in I_C
\end{equation}
where $\rho(\theta):=\underset{C\in E^*}{\min}{\rho_C(\theta)}$.
\end{theorem}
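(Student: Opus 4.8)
The plan is to reduce the global notion of inclusion-minimality to the purely local, one-set-at-a-time criterion already furnished by Theorem \ref{teo2}. The bridge between the two is a monotonicity observation, which I would establish first: if $\tilde E$ is any upper exhauster of $h$ with $\tilde E\subseteq F\subseteq E^*$, then $F$ is also an upper exhauster. Indeed, since the pointwise minimum over a larger family is no larger, the inclusions $E^*\supseteq F\supseteq\tilde E$ give $\underset{C\in E^*}{\min}H(C,u)\le\underset{C\in F}{\min}H(C,u)\le\underset{C\in\tilde E}{\min}H(C,u)$ for every $u\in S^1$, and both ends equal $h(u)$; hence $\underset{C\in F}{\min}H(C,u)=h(u)$ and $F$ is an exhauster.

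From this I would deduce that $E^*$ is minimal by inclusion if and only if no single set can be discarded, i.e. if and only if $E^*\setminus\{C_0\}$ fails to be an upper exhauster for every $C_0\in E^*$. The forward implication is immediate from the definition of minimality. For the reverse, if some proper subexhauster $\tilde E\subsetneq E^*$ existed, I would pick $C_0\in E^*\setminus\tilde E$ and apply the monotonicity observation to $\tilde E\subseteq E^*\setminus\{C_0\}\subsetneq E^*$, concluding that $E^*\setminus\{C_0\}$ is an exhauster, so some single set is discardable.

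Next I would invoke Theorem \ref{teo2}. The standing hypothesis of Theorem \ref{teo3} — that the elements of $E^*$ do not contain one another — guarantees that for each fixed $C_0\in E^*$ the family $E^*$ contains no proper subset of $C_0$, so the hypothesis of Theorem \ref{teo2} is met for every element. I would use Theorem \ref{teo2} in the equivalence form that its proof in fact establishes: its argument shows not only that the existence of a nondegenerate interval $[\theta_1,\theta_2]$ with $\rho_{C_0}=\rho$ forces $E^*\setminus\{C_0\}$ to fail to be an exhauster, but also the converse. Thus, for each $C_0\in E^*$, the set $C_0$ cannot be discarded precisely when there is a closed interval $I_{C_0}=[\alpha_{C_0},\beta_{C_0}]$ with $\alpha_{C_0}<\beta_{C_0}$ on which $\rho_{C_0}\equiv\rho$. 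Chaining this with the equivalence of the previous paragraph yields the theorem: $E^*$ is minimal by inclusion iff every $C\in E^*$ admits such an interval.

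The main obstacle I anticipate is not the application of Theorem \ref{teo2} but the careful packaging of the two equivalences so that they chain correctly — in particular making the monotonicity step fully rigorous and confirming that the interval produced in the converse half of Theorem \ref{teo2} is genuinely nondegenerate. The latter holds because the piecewise-sinusoidal functions $\rho_{C_0}$ and $\bar\rho$ are continuous, so a strict inequality $\rho_{C_0}(\bar\theta)<\bar\rho(\bar\theta)$ at a point propagates to a neighborhood. A secondary point to verify is that the non-containment hypothesis is used exactly where Theorem \ref{teo2} requires it and nowhere else, so that the resulting interval is nondegenerate for each element of $E^*$.
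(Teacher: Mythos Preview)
Your proposal is correct and follows essentially the same route as the paper: both directions are obtained by applying Theorem~\ref{teo2} (in its full biconditional form, which its proof establishes) to each element of $E^*$. The only difference is that you make explicit the monotonicity step reducing ``minimal by inclusion'' to ``no single set is discardable,'' whereas the paper asserts this passage without justification; your version is therefore more complete.
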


\begin{proof}
Let $C\in E^*$ be an arbitrary set. Since there isn't any subset of $C$ in $E^*$ and there exists a closed interval $I_C=[\alpha_C,\beta_C]\subset[0,2\pi]$ satisfying (\ref{Eq11}), the set $C$ can not be discarded from $E^*$ by Theorem \ref{teo2}. Since $C$ is arbitrary it follows that there is no other upper exhauster $\widetilde{E}^*$ which is smaller by inclusion than $E^*$ that means $E^*$ is minimal by inclusion.

Conversely, let us assume that $E^*$ is minimal by inclusion and take any $C\in E^*$. Since $\bar E^*=E^*\setminus\{C\}$ is not an upper exhauster of $h$, then by Theorem \ref{teo2} there exists an interval $I_C=[\alpha_C,\beta_C]\subset[0,2\pi]$ such that
\[\rho_C(\theta)=\rho(\theta), \forall\theta\in I_C\]
which is desired result.
\end{proof}

\begin{example}
The upper exhauster given in Example \ref{ex1} is minimal by inclusion by Theorem \ref{teo3}.
\begin{figure}
\begin{tabular}{clc}
\includegraphics[scale=.5]{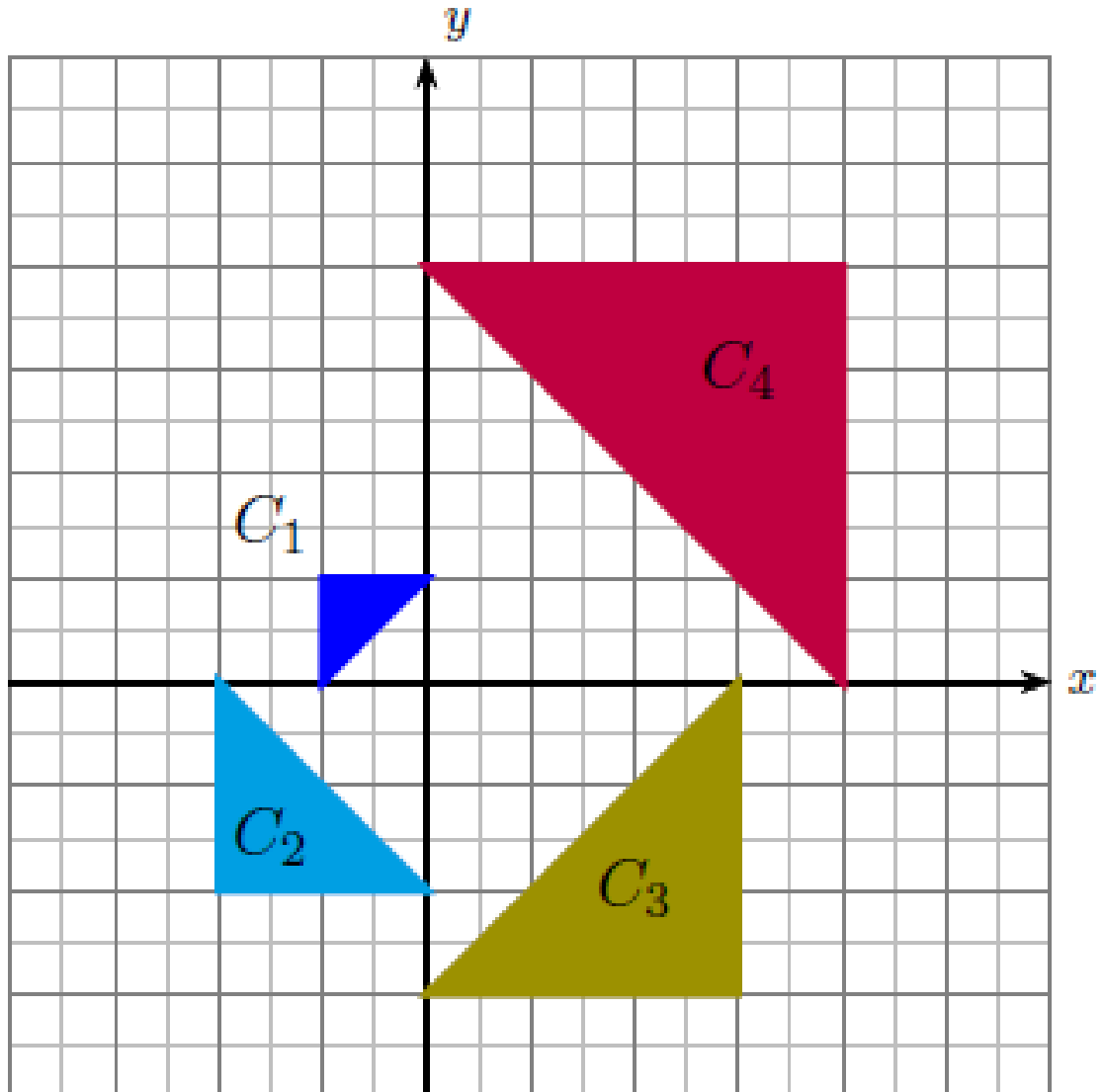}
&
\hspace{-.4cm}
\includegraphics[scale=.5]{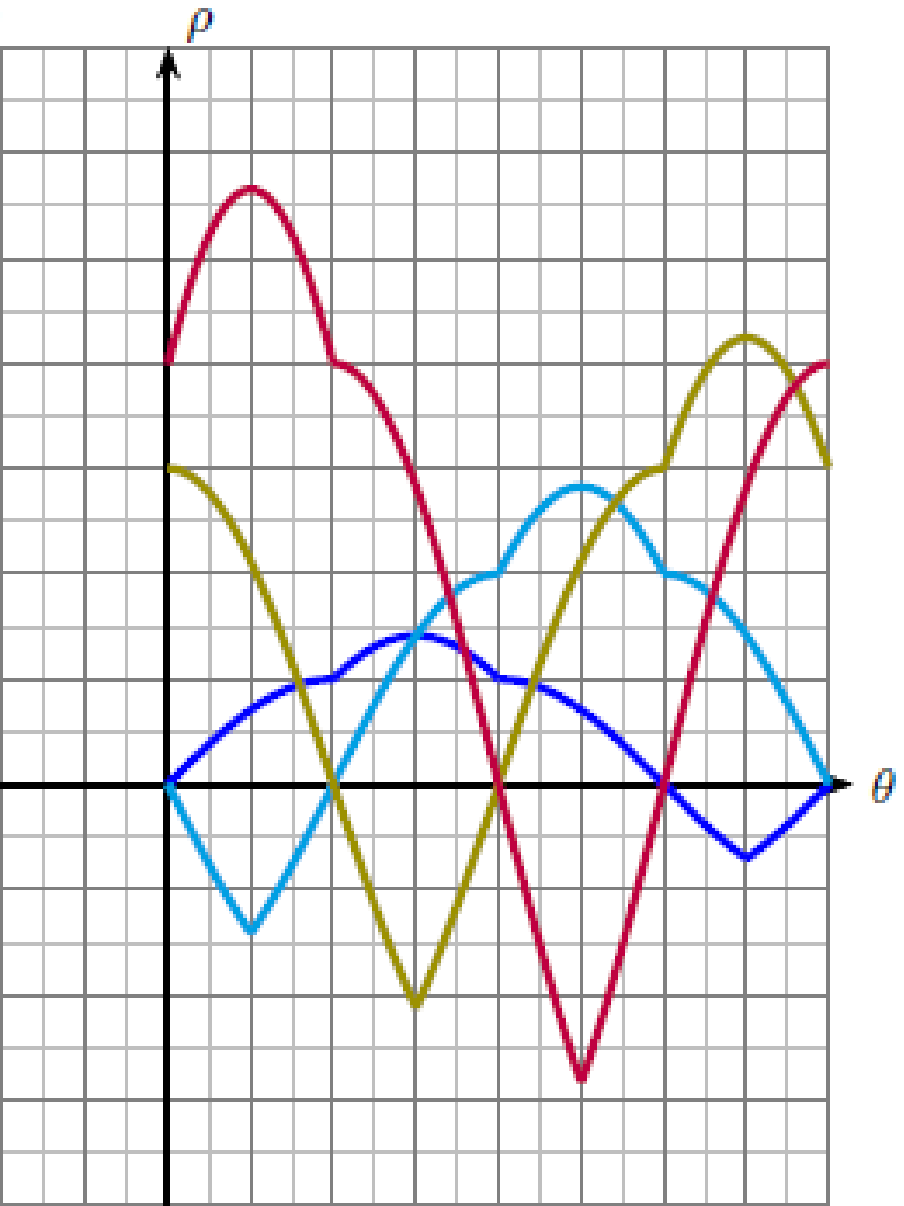}
&
\hspace{-.4cm}
\includegraphics[scale=.5]{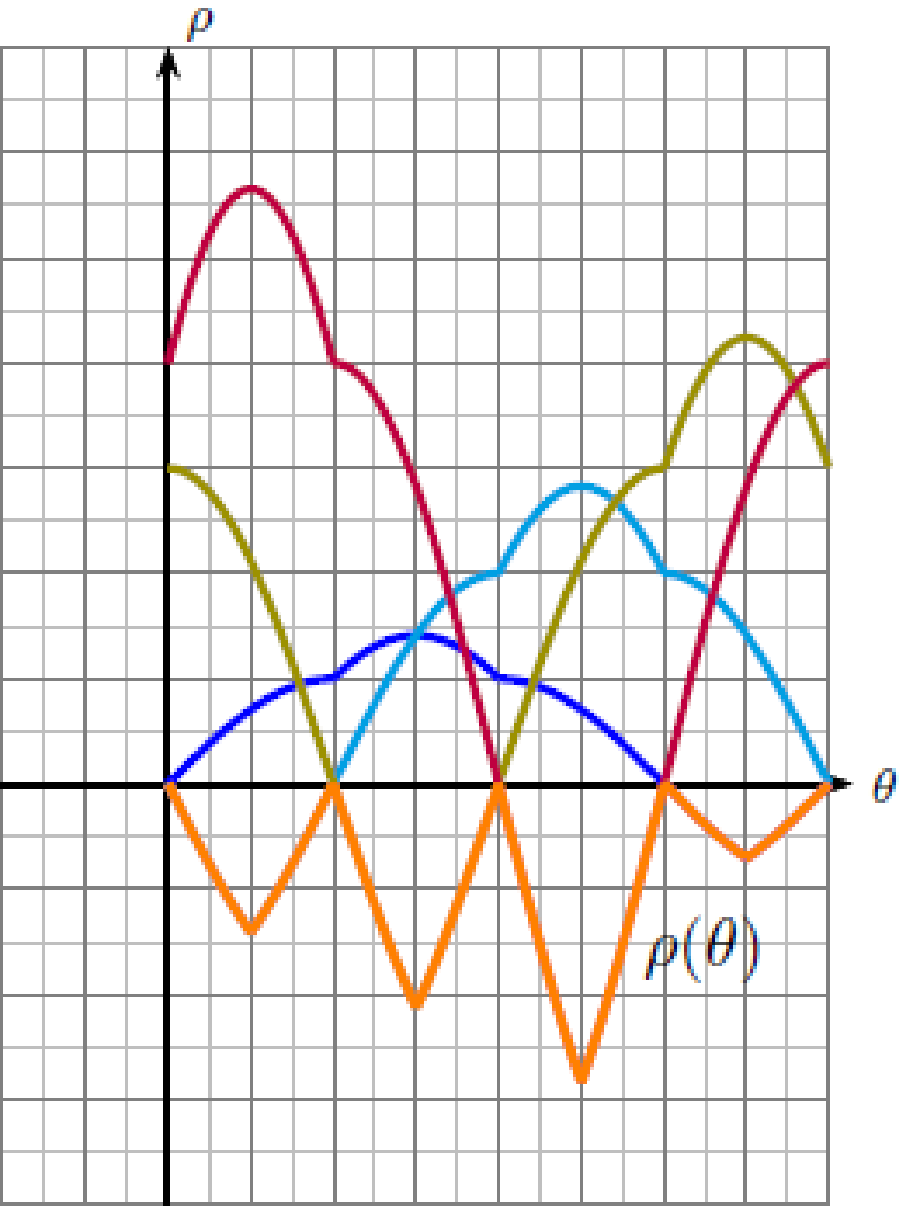}
\end{tabular}
\caption{(a) The sets belonging to the upper exhauster $E^*$ of Example 4. (b) $\theta\rho$-representations of the sets $C_1,C_2,C_3,C_4$ of $E^*$. (c) None of the sets can be discarded from $E^*$ by Theorem \ref{teo2}.}\label{fig6}
\end{figure}
\end{example}

\begin{remark}
It is important that the equation (\ref{Eq11}) is satisfied on an interval, because when (\ref{Eq11}) holds for some discrete points of $[0,2\pi]$ we can not guarantee the inclusion minimality of the exhauster. 
\end{remark}

\begin{example}\cite[Example 3.3]{ros2}
Consider the zero function $h:\R^2\to\R$, $h(x,y)=0$. Define the sets $B_{\alpha}:=B(\alpha,1)$ for $\alpha\in S^1$ which are unit balls tangential to the origin. Then one can see that $$E^*=\{B_{\alpha}\ |\ \alpha\in S^1\}$$ is an upper exhauster of $h$. Let us denote the $\theta\rho$-representations $\rho_{B_{\alpha}}$ by $\rho_{\alpha}$, shortly.

Some of the $\theta\rho$-representations of $B_{\alpha}$ are given in Fig. \ref{fig7}.
\begin{figure}\vspace{-.6cm}
\includegraphics[width=10cm]{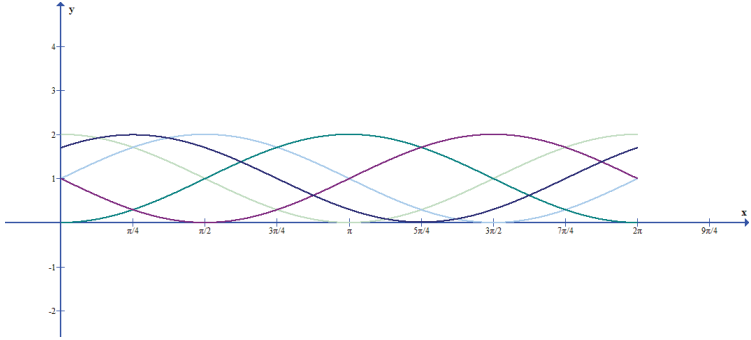}
\caption{$\theta\rho$-representations of some $B_{\alpha}$ belonging to the upper exhauster of zero function.}\label{fig7}
\end{figure}
As seen for all $\alpha\in S^1$ there exists a unique $\theta_{\alpha}\in[0,2\pi]$ such that $\rho_{\alpha}(\theta_{\alpha})=0.$

Moreover, it is clear that $\rho(\theta)=\underset{\alpha\in S^1}{\min}{\rho_{\alpha}(\theta)}=0$. Hence, each of the curves $\rho_{\alpha}$ coincides with the curve $\rho$ at only one point (not on an interval), namely $\theta_{\alpha}$. That is
\[\rho_C(\theta_{\alpha})=\rho(\theta_{\alpha}).\]

On the other hand, $\rho_{\alpha}(\theta)>0=\rho(\theta)$ for all $\theta\in[0,2\pi]\setminus \{\theta_{\alpha}\}$. Therefore, by Corollary \ref{cor1}, we can discard all of the sets $B_{\alpha}$ included in $E^* $ and add the intersection $\underset{\alpha\in S^1}{\bigcap}{B_{\alpha}}=\{(0,0)\}$, since the min operation of support functions means intersection of corresponding sets. Hence, the family $E^*$ can be reduced to a rather smaller exhauster, namely $\widetilde{E}^*=\{\{(0,0)\}\}$, and that means $E^*$ is not minimal by inclusion.
\end{example}

\section{Conclusion}

In this study,  we observe that sets in upper exhausters can be converted into some sinusoidal curves in a very simple and practical way, and when we observe all curves together it can be easily determined which sets are unnecessary in an upper exhauster. These representations of sets in $\theta\rho$-space enable us to develop a very useful method of reducing exhausters. In the continuation of this initial work, it is foreseen to obtain very efficient results in terms of minimality by shape of the exhausters.

It is obvious that results obtained in this study can be established for lower exhausters, analogously.


%

\section*{Funding}
Not applicable
\section*{Conflict of interest}
The authors declare that they have no conflict of interest.
\section*{Availability of data and material}
Data sharing not applicable to this article as no datasets were generated or analysed during the current study.
\section*{Code availability}
Not applicable


%


\end{document}